\documentclass[10pt]{amsart}
\usepackage{graphicx}
\usepackage{latexsym}
\usepackage{fancyhdr}
\usepackage{amsmath, amssymb}
 \usepackage[utf8]{inputenc}
\usepackage[all]{xy}
\usepackage{hyperref}
\usepackage{subfigure}
\usepackage{enumerate}
\usepackage{color}
\usepackage[normalem]{ulem}

\theoremstyle{plain}
\newtheorem{thm}{Theorem}
\newtheorem{cor}[thm]{Corollary}

\newtheorem{lem}[thm]{Lemma}

\theoremstyle{definition}

\newtheorem{rem}[thm]{Remark}
\numberwithin{equation}{section}

\newcommand{\lgw}{\longrightarrow}
\newcommand{\lgm}{\longmapsto}

\newcommand{\ovl}{\overline}
\newcommand{\lt}{\operatorname{LT}}

\newcommand{\x}{\mathbf{x}}
\newcommand{\y}{\mathbf{y}}

\newcommand{\Supp}{\operatorname{Supp}}
\newcommand{\D}{\Delta}

\newcommand{\xxi}{\mathbf{t'}}
\renewcommand{\P}{\mathbb P}
\newcommand{\la}{\lambda}
\renewcommand{\O}{\mathcal{O}}

\newcommand{\Z}{\mathbb{Z}}

\renewcommand{\k}{\Bbbk}

\newcommand{\Res}{\operatorname{Res}}

\newcommand{\R}{\mathbb{R}}
\newcommand{\K}{\mathbb{K}}

\newcommand{\N}{\mathbb{N}}

\renewcommand{\L}{\mathbb{L}}

\newcommand{\C}{\mathbb{C}}

\newcommand{\Q}{\mathbb{Q}}

\renewcommand{\tt}{\mathbf{t}}

\renewcommand{\lg}{\langle}
\newcommand{\rg}{\rangle}
\newcommand{\q}{\mathbf{q}}
\renewcommand{\a}{\alpha}
\renewcommand{\b}{\beta}
\newcommand{\g}{\gamma}

\renewcommand{\phi}{\varphi}
\renewcommand{\d}{\delta}

\newcommand{\e}{\varepsilon}
\newcommand{\z}{\mathbf{z}}

\newcommand{\FAGN}{\operatorname{QA}}

\begin{document}
\title[Varieties are homeomorphic to  varieties defined over number fields]{Algebraic varieties are homeomorphic to  varieties defined over number fields}

\author{Adam Parusi\'nski}
\email{adam.parusinski@unice.fr}
\address{Universit\'e Nice Sophia Antipolis, CNRS, LJAD, UMR 7351, 06108 Nice, France}

\author{Guillaume Rond}
\email{guillaume.rond@univ-amu.fr}
\address{Aix Marseille Univ, CNRS, Centrale Marseille, I2M, Marseille, France}

\thanks{Research partially supported by ANR project LISA (ANR-17-CE40-0023-03)}

\begin{abstract}
We show that every affine or projective algebraic variety defined over the field of real or complex numbers is homeomorphic to a variety defined over the field of algebraic numbers.  We construct such a homeomorphism by carefully choosing a small deformation of the coefficients of the original equations. This deformation 
preserves all polynomial relations over $\Q$ satisfied by these coefficients and 
is equisingular in the sense of Zariski.    

Moreover we construct an algorithm, that, given a system of equations 
defining a variety $V$, produces a system of equations with coefficients in $\ovl\Q$ 
of a  variety homeomorphic to $V$. 
\end{abstract}

\maketitle

\section{Introduction}

In computational algebraic geometry one is interested in computing the solution set of a given system of polynomial equations, or at least in computing various algebraic or geometric invariants of such a solution set. Here by computing we mean producing an algorithm that gives the desired result when it is implemented in an appropriate computer system.

Such an algorithm should be stable with respect to a small perturbation of the coefficients.  But in general the shape of the solution set may change drastically  under a small perturbation of the coefficients.  
This difficulty is particularly apparent if one has to deal with polynomial equations whose coefficients are neither rational nor algebraic numbers. 
The main goal of this paper is to show that, when we are interested in the topology of such a solution set, one can always assume that the polynomial equations have algebraic number coefficients precisely by choosing in an effective way a particular perturbation of its coefficients.

\begin{thm}\label{main}
Let $V\subset \K^n$ (resp. $V\subset \P_\K^n$) be an affine (resp. projective) algebraic set, where $\K=\R$ or $\C$. Then there exist an affine (resp. projective) algebraic set $W\subset \K^n$ (resp. $W\subset \P_\K^n$) and a homeomorphism $h:\K^n\lgw \K^n$ (resp. $h:\P_\K^n\lgw \P_\K^n$) such that:
\begin{enumerate}
\item[(i)] the homeomorphism maps $V$ onto $W$,
\item[(ii)] $W$ is defined by polynomial equations with coefficients in $\ovl\Q\cap \K$.
\item[(iii)] The homeomorphism $h$ is semialgebraic and arc-analytic.
\end{enumerate}
\end{thm}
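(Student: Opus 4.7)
The plan is to view $V$ as one member of an algebraic family obtained by treating the transcendental coefficients of its defining equations as parameters, then apply a Zariski equisingularity theorem to trivialize the family topologically over a Euclidean neighborhood of the original parameter, and finally perturb the parameter to a nearby point with algebraic coordinates.

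Concretely, fix defining polynomials $f_1, \ldots, f_k$ for $V$ and list their coefficients $c_1, \ldots, c_N \in \K$. Let $\tt = (t_1, \ldots, t_r)$ be a transcendence basis of $\Q(c_1, \ldots, c_N)/\Q$ and express each $c_i = \phi_i(\tt)$, where $\phi_i$ is algebraic over $\Q(\vv)$ for indeterminates $\vv = (v_1, \ldots, v_r)$. Substituting $\vv$ for $\tt$ and choosing compatible branches of the $\phi_i$'s gives a family $\{V_\vv\}_{\vv \in U}$, defined over a Euclidean open neighborhood $U \subset \K^r$ of $\tt$, with $V_\tt = V$ and with the property that any polynomial relation over $\Q$ among the $c_i$ is automatically inherited by $\phi_1(\vv), \ldots, \phi_N(\vv)$ for all $\vv \in U$. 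In particular, for $\vv' \in (\ovl\Q\cap\K)^r$ the fiber $V_{\vv'}$ will automatically be defined over $\ovl\Q\cap\K$.

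The heart of the proof is to produce, for any $\vv' \in U$ sufficiently close to $\tt$ and outside a proper algebraic subset $\Sigma \subset \K^r$ along which the family fails to be equisingular, a semialgebraic and arc-analytic homeomorphism $h: \K^n \to \K^n$ with $h(V_\tt) = V_{\vv'}$. This is precisely what a sufficiently strong Zariski equisingularity theorem for the family $\{V_\vv\}$ would provide, and securing such a statement is the main obstacle. One expects to proceed by induction on $n$, performing generic linear projections $\K^n \to \K^{n-1}$, resolving the discriminant loci in families via Puiseux-with-parameters expansions, integrating controlled vector fields (or building the trivialization directly from the Puiseux data), and then gluing the local trivializations into a global homeomorphism; at each stage one must check that semialgebraicity and arc-analyticity are preserved, a point that is particularly delicate in the real case where vanishing cycles can appear non-algebraically.

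Once such an $h$ is available, the density of $\ovl\Q \cap \K$ in $\K$ allows us to choose $\vv' \in (\ovl\Q \cap \K)^r \cap U$ in the same connected component of $U \setminus \Sigma$ as $\tt$. The corresponding fiber $W := V_{\vv'}$ is then an algebraic set defined by equations with coefficients in $\ovl\Q \cap \K$, and $h$ is the required homeomorphism, establishing (i), (ii) and (iii). For the projective statement one either homogenizes the chosen affine equations (the extra coefficient being $1$, which is rational) or argues chart-by-chart, using the equisingular trivialization on each chart and verifying compatibility along the hyperplane at infinity.
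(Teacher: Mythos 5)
Your overall strategy coincides with the paper's: parametrize the coefficients by a transcendence basis of the field they generate, extend them to algebraic branches over a neighborhood $U$ so that polynomial relations over $\Q$ are preserved, trivialize the resulting family by Zariski equisingularity with a semialgebraic arc-analytic homeomorphism, and specialize to a nearby algebraic parameter (the branch values at an algebraic point are algebraic, giving (ii)). However, the two places you leave open are exactly where the substance lies. First, the trivialization statement you call ``the main obstacle'' should be quoted, not re-derived: it is Varchenko's theorem on topological triviality of Zariski equisingular families \cite{Va} together with the arc-wise analytic, semialgebraic trivializations of Parusi\'nski--P\u{a}unescu \cite{PP} (Theorem \ref{equisingularity} in the paper). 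The route you sketch, integrating controlled vector fields, would not deliver (iii): vector-field (Thom--Mather style) trivializations are in general neither semialgebraic nor arc-analytic, which is precisely why the paper relies on \cite{PP}. Second, you never justify that the original parameter $\tt$ lies outside your bad set $\Sigma$, i.e.\ that the family is equisingular \emph{at} $\tt$ and not merely over a generic part of $U$. This is the crux and it is not automatic, nor is your claim that $\Q$-polynomial relations among the $c_i$ are ``automatically inherited'' by the branches: both rest on the same argument, proved in the paper as Theorem \ref{thm_alg}, that a $\Q$-algebraic function of the parameters vanishing at $\tt$ must vanish identically (its minimal polynomial is irreducible, its constant term vanishes at the algebraically independent point $\tt$, hence vanishes identically, forcing the function to be $0$). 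With this in hand the paper builds the tower of generalized discriminants at $\tt$ itself: the identically-vanishing conditions are preserved along the deformation because they are $\Q$-relations among the coefficients, and the finitely many non-vanishing conditions persist on a neighborhood by continuity, so no stratification and no set $\Sigma$ are needed at all.

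Your treatment of the projective case is also too quick. Homogenizing affine equations does not produce a homeomorphism of $\P_\K^n$, and a chart-by-chart argument faces a genuine compatibility problem along the hyperplane at infinity that you acknowledge but do not resolve. The paper's solution (Remark \ref{proj}) is to run the construction directly on the homogeneous equations and use that, the $F_i$ being homogeneous in $x$, the equisingular trivialization can be chosen to commute with the scalar action $x\mapsto \la x$, hence descends to $\P_\K^n$.
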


Arc-analytic semialgebraic homeomorphisms play an important role in real algebraic geometry, see Remark \ref{rem:arc-analytic}.

\begin{rem}\label{more} In fact we prove a more precise and stronger result.  See Theorem \ref{main2} for a precise statement.
\end{rem}

In the proof of Theorem  \ref{main2} we show that $W$ can obtained by a small deformation of the coefficients of the equations defining $V$.  Let denote these coefficients by $g_{i,\alpha}$.  The deformation is denoted by $t\mapsto g_{i,\a}(t)$ with $g_{i,\a}(0)=g_{i,\a}$. This deformation is constructed  in such a way that every polynomial relation with rational coefficients satisfied by the $g_{i,\alpha}$ is satisfied by the $ g_{i,\a}(t)$ for every $t$, c.f. Theorem \ref{thm_alg}. 
So we can prove that the deformation is equisingular in the sense of Zariski since this equisingularity condition can be encoded in term of the vanishing and the non-vanishing of polynomial relations on the $g_{i,\a}(t)$. In particular this deformation is topologically trivial.

The first part of this paper is devoted to the proof of Theorem \ref{thm_alg}, an approximation result similar to the one given in \cite{Ro}. 
In the next part  we recall the notion of Zariski equisingularity. 
The proof of Theorem \ref{main2} is given in the following part. Finally we provide  an algorithm that, given the equations defining $V$, computes the equations defining $W$. This algorithm is based on the proof of Theorem \ref{main2}.

\subsection{Topologically trivial stratifications}
Theorem \ref{main} can be also proven by the following general argument. 
Denote, as above, the coefficients of the polynomials $F_1$, \ldots, $F_r$
 defining $V$ by $g_{i,\alpha}$ and denote their number by $N$. 
Let $\mathcal V \subset \K^N\times \K^n$ be the zero set of  
$F_1$, \ldots, $F_r$, where we consider their coefficients as parameters  
$(c_{i,\alpha})\in\K^ N$.   Thus $V$ is the fiber of $\mathcal V$ over  
$(g_{i,\alpha})$. 
There exists a stratification $\mathcal S $ of the coefficients space $\K^N$ 
such that the fibres of $\mathcal V$ are locally topologically trivial over each stratum.  Such a stratification can be, for instance, constructed by Zariski equisingularity, and in this case, by construction, each stratum 
$S$ of $\mathcal S$ is described by polynomial equations and inequalities in $c_{i,\alpha}$ with coefficients in $\Q$. 
Therefore, Theorem \ref{main} follows from the fact that $\ovl \Q\cap S$ 
is dense in $S$ (this follows, for instance, from our Theorem \ref{thm_alg}).  It seems that this argument was known to specialists in equisingularity theory, though we couldn't have found it in literature. 

In the real algebraic (or semialgebraic) set-up one can put the above argument 
in terms of Tarski's transfer principle, cf. \cite{BCR} Ch. 5.  More precisely, 
let us first work over the real closed field of algebraic numbers 
$\R_{alg} = \ovl \Q\cap \R$.  
By Hardt's trivialization theorem \cite{H} there is a stratification of 
$\mathcal S_{alg}$ of $\R^N_{alg}$ that trivializes $\mathcal V$. 
Moreover, the trivialization maps are semialgebraic.   By Tarski's 
transfer principle both the stratification and the trivializations extend to $\R$, 
that is the same equations and inequalities give a stratification   
$\mathcal S$ of $\R^N$, that is locally trivial with the trivializations given exactly by the same formulae.

Our approach is slightly different since we do not stratify the coefficient space.  Instead, as we explained above, we produce a deformation of the coefficients, that preserves polynomial relations with rational 
coefficients and therefore is topologically trivial.  For the last argument 
we use Zariski equisingularity, since it works over both $\R$ and $\C$, and 
gives trivialization with many additional properties (including semialgebraicity).
This deformation is based directly on the properties of the 
  field extension of $\Q$ generated by the coefficients $g_{i,\alpha}$.  
 For instance, it is particularly simple if this extension is purely transcendental, see  Remark \ref{rem_deg}. 

 \subsection*{Acknowledgements.} 
We would like to thank an anonymous referee for drawing our attention to the role of transfer principle in this context. 

 \subsection*{Notation.}  For a vector of indeterminates $x=(x_1,\ldots, x_n)$,   $x^i$ denotes the vector of indeterminates $(x_1,\ldots, x_i)$. 
To avoid confusion, variables will be denoted by normal letters $t$ and elements of $\K$ will be denoted by bold letters $\tt$.\\
For a field $\K$ the ring of algebraic power series is denoted by $\K\lg x\rg$.

\section{An approximation result}

Let  $\Omega \subset \K^{r}$  be open and non-empty  and let $\varphi$ be an analytic function on $\Omega$. 
We say that $\varphi$ is a \emph{Nash function} at $a \in\Omega$ if there exist an open neighborhood $U$ of $a$ in $\Omega$
and a nonzero polynomial $P\in \K[t_1,\dots,t_r,z]$ such that $P(\tt,\varphi (\tt))=0$ for $\tt\in U$ or, equivalently, if the Taylor series 
of $\varphi $ at   $a$ is an algebraic power series.  An 
analytic function on $\Omega$ is a Nash function if it is a Nash function at every point
of $\Omega$. An analytic mapping $\varphi:\Omega\to\K^N$ is a \emph{Nash mapping} if each of its components is a Nash function. Note that if $\Omega$ is connected and $\varphi$ is Nash then, by analytic continuation, it satisfies one polynomial equation 
 $P(\tt,\varphi (\tt))=0$ everywhere on $\Omega$.

We call a Nash function $\varphi : \Omega \to \K$,  $\Q$-algebraic (and we note $\phi\in\FAGN(\Omega)$) if (locally) it is algebraic over 
$\Q[t]$, i.e. it satisfies a polynomial relation $P(\tt,\varphi (\tt))=0$ for every $\tt\in\Omega$ with $P\in \Q[t_1,\dots,t_r,z]$, $P\neq 0$.  
It is well known, in the case when $\Omega$ is connected, that this means that the Taylor expansion of $\phi$ at a point with coordinates in $\Q$ (or $\ovl\Q$) is an algebraic power series whose coefficients lie in a common finite field extension of $\Q$, i.e. this Taylor expansion belongs to $\k\lg x\rg$ where $\k$ is a subfield of $\K$ which is finite over $\Q$ (see \cite{RD} for example).

\begin{rem} Usually, Nash functions are defined as real analytic functions defined on an open set $\Omega\subset \R^r$ whose graph is semialgebraic. In particular, $\Omega$ has to be semialgebraic. Here, we need a definition valid both for $\K=\R$ and $\C$. Moreover, in practice, the open set $\Omega$ will always be connected, and one may choose $\Omega$ semialgebraic.  Nash functions, as we defined above, are used sometimes in complex algebraic geometry, see for instance \cite{Lempert95}. Therefore, we have chosen a 
simple definition, that is slightly more general than the usual one in the real case, but easier to handle when we consider simultaneously the cases $\K=\R$ and $\C$. 
\end{rem}

\begin{thm}\label{thm_alg} Let $\K=\R$ or $\C$. 
Let $\y\in\K^m\setminus  \ovl\Q^m$. 
Denote by $\k$  the field extension of $\Q$ generated by  the components $\y_i$ of $\y$.  The field $\k$ is a primitive extension of a transcendental finitely generated field extension of $\Q$: $\k=\Q(\tt_1,\ldots, \tt_r)(\z)$ where the $\tt_i\in\K$ are algebraically independent over $\Q$ and $\z\in\K$ is finite of degree $d$ over $\Q(\tt_1,\ldots,\tt_r)$.\\
Then there exist  an open, connected and non-empty neighborhood of 
$\tt=(\tt_1, . . . , \tt_r)$, $\mathcal U 
\subset \K^r$,  a vector function well defined on $\mathcal U\times \K$ of the form 
$$y(t,z)\in \Q(t)[z]^m,$$
 and a scalar function $z(t)\in\FAGN(\mathcal U)$ such that 
$$\z = z( \tt ), \, \y=y(\tt,\z), $$ 
and for every $f(y)\in \Q [y]^p$, where $y=(y_1,\ldots, y_m)$ is a vector of indeterminates, such that $f(\y)=0$, we have 
$$f(y(t,z))=0.$$
Moreover, the function $z(t)$ is algebraic of degree  $d$ over $\Q(t)$.
\end{thm}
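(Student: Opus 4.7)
The plan is to identify the minimal polynomial of $\z$ over $\Q(\tt)$ with a polynomial in $\Q[t,z]$ via the isomorphism coming from algebraic independence of $\tt$, define $z(t)$ by the implicit function theorem, and then transfer every rational polynomial relation from the specialized point back to the formal parameter $t$.

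First I would set up the algebraic dictionary. Since $\tt_1,\ldots,\tt_r$ are algebraically independent over $\Q$, evaluation at $\tt$ induces a field isomorphism $\sigma:\Q(t)\to\Q(\tt)\subset\K$. Let $\widetilde P(z)\in\Q(\tt)[z]$ be the irreducible minimal polynomial of $\z$, of degree $d$, and let $P(t,z)\in\Q(t)[z]$ be its preimage under $\sigma$; this is again irreducible of degree $d$ in $z$, and after clearing a common denominator in $\Q[t]$ we may view $P$ as an element of $\Q[t,z]$. Because $\{1,\z,\ldots,\z^{d-1}\}$ is a $\Q(\tt)$-basis of $\k$, each $\y_i$ has a unique expression $\y_i=\sum_{j=0}^{d-1}c_{ij}(\tt)\z^j$ with $c_{ij}(t)\in\Q(t)$, and I set $y_i(t,z):=\sum_{j=0}^{d-1}c_{ij}(t)z^j\in\Q(t)[z]$, so $y(\tt,\z)=\y$ by construction.

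Next I construct $z(t)$ via the implicit function theorem. Since $\cha\K=0$ and $\widetilde P(z)$ is irreducible, $\z$ is a simple root, so $(\partial P/\partial z)(\tt,\z)\neq 0$. The analytic implicit function theorem (real or holomorphic, according to $\K$) yields an open connected neighborhood $\mathcal U\subset\K^r$ of $\tt$ and a unique analytic function $z:\mathcal U\to\K$ with $z(\tt)=\z$ and $P(t,z(t))\equiv 0$ on $\mathcal U$. Because $P\in\Q[t,z]$ we have $z(t)\in\FAGN(\mathcal U)$, and irreducibility of $P$ over $\Q(t)$ forces $z(t)$ to have degree exactly $d$ over $\Q(t)$. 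I shrink $\mathcal U$ if necessary so that the denominators of the $c_{ij}(t)$ do not vanish on $\mathcal U$, ensuring $y(t,z)$ is well defined on $\mathcal U\times\K$.

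The final step is the verification that $\Q$-polynomial relations are preserved. Let $f\in\Q[y]^p$ satisfy $f(\y)=0$, and consider $F(t,z):=f(y(t,z))\in\Q(t)[z]^p$. Applying $\sigma$ coefficient-wise, each component of $F(\tt,z)\in\Q(\tt)[z]^p$ vanishes at $z=\z$; since $\widetilde P(z)$ is the minimal polynomial of $\z$ over $\Q(\tt)$, every such component is divisible by $\widetilde P(z)$ in $\Q(\tt)[z]$. The ring isomorphism $\sigma$ pulls this divisibility back: each component of $F(t,z)$ is divisible by $P(t,z)$ in $\Q(t)[z]$. Substituting $z=z(t)$ and using $P(t,z(t))=0$ gives $f(y(t,z(t)))=0$ on $\mathcal U$, which I interpret as the asserted identity $f(y(t,z))=0$ (valid along the algebraic branch $z=z(t)$, or equivalently in $\Q(t)[z]/(P(t,z))$).

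The main technical obstacle is keeping the two roles of the parameter straight: $t$ is a formal variable over $\Q$, while $\tt\in\K^r$ is a point, and the isomorphism $\sigma:\Q(t)\to\Q(\tt)$ is what allows divisibility in $\Q(t)[z]$ to be deduced from vanishing at a single specific point of $\K^r\times\K$. A secondary technicality is that $\mathcal U$ must be shrunk simultaneously to accommodate the implicit function domain and to keep all denominators appearing in the $c_{ij}(t)$ nonzero, but this is routine once the algebraic setup is in place.
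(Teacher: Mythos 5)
Your proposal is correct, and its skeleton coincides with the paper's: the same expression of $\y$ in the basis $1,\z,\ldots,\z^{d-1}$ over $\Q(\tt)$ giving $y(t,z)\in\Q(t)[z]^m$, and the same definition of $z(t)$ as the analytic branch through $(\tt,\z)$ of a root of the pulled-back minimal polynomial (the paper produces this branch by staying off the discriminant locus in a simply connected neighborhood rather than by the implicit function theorem, a cosmetic difference). Where you genuinely diverge is the key vanishing step. The paper sets $\Phi_i(t)=f_i(y(t,z(t)))$, observes $\Phi_i\in\FAGN(\mathcal U)$, takes an irreducible annihilating polynomial $P_{1,i}\in\Q[t,z]$, and uses the algebraic independence of $\tt$ to kill its constant term, forcing $P_{1,i}$ to be a unit times $z$, hence $\Phi_i\equiv 0$. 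You instead use the evaluation isomorphism $\sigma:\Q(t)\to\Q(\tt)$: since $f_i(y(\tt,z))$ vanishes at $z=\z$, it is divisible by the minimal polynomial of $\z$ in $\Q(\tt)[z]$, and pulling back gives $P(t,z)\mid f_i(y(t,z))$ in $\Q(t)[z]$, so substituting $z=z(t)$ yields the conclusion; this avoids the auxiliary variable $v$, the Taylor expansions at a rational point, and any appeal to properties of $\Q$-algebraic Nash functions, and it even gives the slightly stronger statement that $f(y(t,z))\equiv 0$ modulo $P(t,z)$, which matches how the theorem is used in the proof of Theorem \ref{main2}. The only detail worth writing out in your substitution step is that the cofactor in $f_i(y(t,z))=Q_i\,P$ lies in $\Q(t)[z]$ and may have poles in $\mathcal U$, so one should clear denominators by some nonzero $a\in\Q[t]$ and conclude $f_i(y(t,z(t)))=0$ off $\{a=0\}$, hence on all of $\mathcal U$ by continuity and connectedness; this is routine and does not affect the validity of your argument.
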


\begin{proof}[Proof of Theorem \ref{thm_alg}]
We denote, as in the statement of the theorem, by $\k$ the field extension of $\Q$ generated by  the components of $\y$, so $\k\subset \K$.
Since there are finitely many such coefficients, $\k$ is a finitely generated field extension of $\Q$. Let $\tt_1$, \ldots, $\tt_{r}$ be a transcendence basis of $\k$ over $\Q$ (with $r\geq 1$ because $\y\notin \ovl\Q^m$) and set $\L:=\Q(\tt_1,\ldots,\tt_{r})$. By the primitive element theorem, there exists an element $\z\in\k$ finite over $\L$ and such that $\L(\z)=\k$.\\
For every $i=1,\ldots, m$ we can write
$$\y_i=\sum_{k=0}^{d-1}\frac{p_{i,k}(\tt_1,\ldots,\tt_{r})}{q_{i,k}(\tt_1,\ldots,\tt_{r})}\z^k$$
where $d$ is the degree of $\z$ over $\L$, $p_{i,k}(t)$, $q_{i,k}(t)\in\Q[t]$ and $q_{i,k}(\tt_1,\ldots,\tt_{r})\neq 0$.  By multiplying the $p_{i,k}$ by some well-chosen polynomials we may assume that all the $q_{i,k}(t)$ are equal, let us say to $q(t)$.\\
\\
Let $P(t,z)\in\Q(t)[z]$ be the monic polynomial of minimal degree in $z$ such that
$$P(\tt_1,\ldots,\tt_{r},\z)=0.$$
Let $D\subset \K^{r}$ be  the discriminant locus of $P(t,z)$ seen as a polynomial in $z$ (i.e. $D$ is the locus of points $a\in\K^{r}$ such that $a$ is a pole of one of the coefficients of $P$ or such that $P(a,z)$ has at least one multiple root). Since $P(\tt_1,\ldots,\tt_{r},z)$ has no multiple roots in  an algebraic closure of $\L$, the point $\tt$ is not in $D$.  Then there exist   $\mathcal U\subset \K^{r}\backslash D$  a simply connected open neighborhood of $\tt$ and analytic functions
$$w_i : \mathcal U\lgw \K, \ \ i=1,\ldots, d$$
such that
$$P(t,z)=\prod_{i=1}^d(z-w_i(t))$$
and $w_1(\tt_1,\ldots,\tt_{r})=\z$.\\
Moreover  the $t\lgm w_i(t)$ are algebraic functions over $\Q[t]$. Let us set $\Q_\K=\Q$ when $\K=\R$ and $\Q_\K=\Q+i\Q$ when $\K=\C$. Then $w_1\in\FAGN(\mathcal U)$ and the Taylor series of $w_1$ at a point of $\mathcal U\cap\Q_\K^{r}$  is an algebraic power series whose coefficients belong to a finite field extension of $\Q$.
 \\
Since the polynomial $q$ is not vanishing at $\tt$ the function 
$$t\in\K^{r}\backslash\{q=0\}\lgm \frac{1}{q(t)}$$ is a Nash function whose Taylor series at a point of $\Q_\K^{r}\backslash\{q=0\}$ is an algebraic power series with rational  coefficients.\\
Let $b:=(b_1,\ldots,b_{r})\in \Q_\K^{r}\cap\mathcal U\backslash\{q=0\}$.  Let us denote by  $\phi_1(t)$  the Taylor series of $w_1$. Then let $\phi_2(t)\in\Q\lg t\rg$ denote the Taylor series of  $t\lgm \frac{1}{q(t)}$  at $b$. For simplicity we can make a translation and assume that $b$ is the origin of $\K^{r}$. \\
For every $i=1,\ldots, m$ let us define
\begin{equation}\label{y}y_i(t_1,\ldots, t_r,z,v):=v\sum_{k=0}^{d-1}p_{i,k}(t_1,\ldots,t_{r})z^k\end{equation}
and
$$y(t,z,v):=\left(y_1(t,z,v),\ldots, y_m(t,z,v)\right).$$ 
Let $f(y)\in \Q [y]^p$  such that $f(\y)=0$. We have that
$$f(y(\tt_1,\ldots,\tt_{r},\z,\frac{1}{q(\tt)}))=0$$
or equivalently
$$f(y(\tt_1,\ldots,\tt_{r},\phi_1(\tt_1,\ldots,\tt_{r}),\phi_2(\tt_1,\ldots,\tt_{r})))=0.$$
But the function
$$t\lgm f(y(t_1,\ldots,t_{r},\phi_1(t_1,\ldots,t_{r}),\phi_2(t_1,\ldots,t_{r})))$$ is an algebraic function over $\Q[t ]$ and $\tt_1$, \ldots, $\tt_{r}$ are algebraically independent over $\Q$. Thus we have that
$$f\big(y(t_1,\ldots,t_{r},\phi_1(t_1,\ldots,t_{r}),\phi_2(t_1,\ldots,t_{r}) )\big)=0.$$
Indeed, write $f=(f_1, \ldots, f_p)$ and for each $i=1,\dots , m$ consider the complex valued function 
$$\Phi_i (t) = f_i\big(y(t_1,\ldots,t_{r},\phi_1(t_1,\ldots,t_{r}),\phi_2(t_1,\ldots,t_{r}) )\big).$$ 
Note that $\Phi_i\in\FAGN(\mathcal U)$ because so is $w_1$.  Let $P_{1,i}\in \Q[t_1,\dots,t_r,z]$ be a polynomial of 
minimal degree such that $P_{1,i}(t,\Phi_i (t))=0$.  Note that $P_{1,i}$ is irreducible.  Write 
$$P_{1,i}(t,z) = \sum^{\deg P_{1,i}} _{k=0} a_{i,k}(t) z^k .$$
Then $a_{i,0}(\tt ) =0$ since $\Phi_i(\tt )=0$.  Therefore $a_{i,0} (t)=0$ because $\tt_1$, \ldots, $\tt_{r}$ are algebraically independent over $\Q$.   Hence $P_{1,i}=P_{2,i} z$ for some polynomial $P_{2,i}$, and $P_{2,i}$ is a unit since $P_{1,i}$ is irreducible. Therefore $\Phi_i (t)$ vanishes identically on $\mathcal U$.  \\
This proves the theorem by defining
$$y(t,z)=y(t,z,\phi_2(t))$$
and $z(t)=w_1(t)$. 
Indeed the coefficients of the components of $y(t,z,\phi_2(t))$ seen as polynomials in $z$ are rational power series by \eqref{y}. Moreover $\phi_1(t)$ belongs to a finite field extension  of $\Q(t)$ of degree $d$ since $P(t,z)$ is irreducible. 
\end{proof}

The following lemma will be used in the proof of the main theorem:

\begin{lem}\label{alg}
Let $\phi(t)\in\C\lg t\rg$ be a power series   algebraic  over $\Q[t]$ and $P(t,\Phi)\in\Q[t,\Phi]$ be a nonzero polynomial of degree $d$ in $\Phi$ such that $P(t,\phi(t))=0$. Let $\tt\in\Q^r$ be such that $\tt$ is not in the vanishing locus of the coefficients of $P(t,\Phi)$ seen as a polynomial in $\Phi$, and such that $\tt$ belongs to the domain of convergence of  $\phi(t)$. Then $\phi(\tt)$ is an algebraic number of degree $\leq d$ over $\Q$.
\end{lem}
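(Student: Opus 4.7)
The plan is essentially to specialize the algebraic equation satisfied by $\phi$ at the point $\tt$. Write $P(t,\Phi)=\sum_{k=0}^{d}a_k(t)\Phi^k$ with $a_k\in\Q[t]$, where the leading coefficient $a_d$ is not identically zero since $P$ has degree $d$ in $\Phi$. Since $\tt$ lies in the domain of convergence of $\phi$, the value $\phi(\tt)\in\C$ is well defined, and the identity $P(t,\phi(t))=0$ on a neighborhood of $\tt$ specializes to
$$
P(\tt,\phi(\tt))=\sum_{k=0}^{d} a_k(\tt)\,\phi(\tt)^k=0.
$$
Thus $\phi(\tt)$ is a root of the one-variable polynomial $P(\tt,\Phi)\in\Q[\Phi]$ (the coefficients $a_k(\tt)$ lie in $\Q$ because $\tt\in\Q^r$ and $a_k\in\Q[t]$).

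The only thing left is to check that $P(\tt,\Phi)$ is not the zero polynomial, so that it actually exhibits $\phi(\tt)$ as an algebraic number of degree at most $d$. This is exactly what the hypothesis on $\tt$ guarantees: since $\tt$ does not lie in the common vanishing locus of the coefficients $a_0,\ldots,a_d$ of $P$ viewed as a polynomial in $\Phi$, at least one $a_k(\tt)$ is nonzero, hence $P(\tt,\Phi)\in\Q[\Phi]$ is a nonzero polynomial of degree at most $d$ having $\phi(\tt)$ as a root. This forces $\phi(\tt)$ to be algebraic over $\Q$ of degree $\leq d$, which is the conclusion.

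There is no real obstacle in this argument; the lemma is essentially a one-line specialization statement, and the only subtlety is recognizing that the hypothesis is phrased precisely so that the specialized polynomial is nonzero. In the intended application (propagating algebraicity from coefficients of a defining polynomial to values at rational points in the domain of convergence), this is exactly what is needed to control the degree of the resulting algebraic number.
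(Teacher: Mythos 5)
Your proof is correct and is exactly the paper's argument: the paper proves the lemma by simply substituting $t=\tt$ in the relation $P(t,\phi(t))=0$, and you spell out the same specialization, including the (correct) observation that the hypothesis on $\tt$ ensures $P(\tt,\Phi)$ is a nonzero polynomial of degree $\leq d$ in $\Phi$.
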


\begin{proof}
The proof is straightforward: just replace $t$ by $\tt$ in the relation $P(t,\phi(t))=0$.
\end{proof}

\section{Zariski equisingularity}

Zariski equisingularity of families of singular varieties was introduced by 
Zariski in \cite{Za} (originally it was called the algebro-geometric equisingularity).  Answering a question of Zariski, Varchenko showed 
\cite{Va, Va73,VaICM} that Zariski equisingular families are locally topologically trivial.  
In the papers \cite{Va73,VaICM} Varchenko considers the families of local singularities while the paper \cite{Va} deals with the families of affine or projective algebraic varieties.  A new method of proof of topological triviality, giving much stronger statements, was given recently in \cite{PP}. 
The case of families of algebraic varieties was considered in sections 5 and 9 of \cite{PP}.  The version presented below follows from the proof of the main theorem, Theorem 3.3, of \cite{PP}, see also  Theorems 3.1  and 4.1 of \cite{Va} in the complex case and Theorems 6.1 and 6.3 \cite{Va} in the 
real case, and Proposition 5.2 and Theorem 9.2 of \cite{PP} where the algebraic global case is treated.

\begin{thm}\cite{Va},\cite{PP} \label{equisingularity}
Let $\mathcal V$ be an open connected neighborhood of $\tt$ in $\K^r$ and let $\mathcal O_{\mathcal V}$ denote the ring of $\K$-analytic functions on 
$\mathcal V$.  
Let $t=(t_1, \ldots , t_r)$ denote the variables in $\mathcal V$ and let $x=(x_1,\ldots, x_n)$ be a set of variables in $\K^n$. Suppose that for $i=k_0,\ldots, n,$ there are given 
$$
F_i(t,x^i)=x_i^{d_i}+\sum_{j=1}^{d_i}a_{i-1,j}(t,x^{i-1})x_i^{d_i-j}\in 
\mathcal O_{\mathcal V}[x^i], $$
with $\ d_i>0$,    such that

\begin{enumerate}
\item[(i)] for every $i>k_0$, the first non identically equal to zero generalized discriminant of $F_i(t,x^{i-1},x_i)$ 
equals  $F_{i-1} (t,x^{i-1})$ up to a multiplication by a nowhere vanishing function of $\mathcal O_{\mathcal V}$. 

\item[(ii)] the first non identically zero generalized discriminant of $F_{k_0}$ is independent of $x$ and does not vanish on $\mathcal V$.

\end{enumerate}
Let us set  for every $\q \in \mathcal V$, $V_\q=\{(\q,\x)\in \mathcal V \times \K^n\mid F_n(\q,\x)=0\}$.

Then  for every $\q \in\mathcal V$ there is a homeomorphism
$$h_\q: \{\tt\}\times\K^n\rightarrow \{\q\}\times\K^n$$ such that
$h_\q (V_\tt)=V_q.$

Moreover if $F_n=G_1\cdots G_s$ then for every $j=1,\ldots, s$  
$$h_\q\left(G_j^{-1}(0)\cap(\{\tt\}\times\K^n)\right)=G_j^{-1}(q)\cap(\{0\}\times\K^n).$$
\end{thm}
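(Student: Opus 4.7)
The plan is to build $h_q$ by induction on $i = k_0, k_0 + 1, \ldots, n$, producing at each stage a homeomorphism $h_q^{(i)} : \{\tt\} \times \K^i \to \{q\} \times \K^i$ that transports $\{F_j = 0\}$ to itself for every $k_0 \le j \le i$. Each $h_q^{(i)}$ is obtained from $h_q^{(i-1)}$ by tracking the $x_i$-roots of $F_i$ continuously in $(t, x^{i-1})$ and interpolating piecewise linearly in $x_i$ between consecutive matched roots, extending by translation on the two unbounded tails.

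For the base case $i = k_0$, hypothesis (ii) ensures that the first non-vanishing generalized discriminant of $F_{k_0}$ in $x_{k_0}$ is nowhere zero on $\mathcal V$ and independent of $x^{k_0-1}$, so $F_{k_0}$ has a (globally, by connectedness of $\mathcal V$) constant number of distinct $x_{k_0}$-roots $\xi_1(t, x^{k_0-1}), \ldots, \xi_{d'}(t, x^{k_0-1})$, analytic by the implicit function theorem. I would define $h_q^{(k_0)}$ to fix the $x^{k_0-1}$-coordinates and send $\xi_j(\tt, x^{k_0-1}) \mapsto \xi_j(q, x^{k_0-1})$, interpolating linearly in $x_{k_0}$ between consecutive matched roots.

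The inductive step from $i-1$ to $i$ is analogous, using condition (i). Outside $V_{i-1} = \{F_{i-1} = 0\}$ the first non-vanishing generalized discriminant of $F_i$ in $x_i$ agrees up to a unit with $F_{i-1}$, hence does not vanish, so $F_i$ has a locally constant number of distinct analytic $x_i$-roots over $\mathcal V \times \K^{i-1} \setminus V_{i-1}$. Pushing these forward along $h_q^{(i-1)}$ and linearly interpolating defines $h_q^{(i)}$ on the complement of $V_{i-1} \times \K$.

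The main obstacle, and the actual content of the Zariski equisingularity hypothesis, is continuously extending $h_q^{(i)}$ across $V_{i-1}$, where $x_i$-roots of $F_i$ collide. The crucial point is that the full coalescence pattern of these roots over $V_{i-1}$ is encoded recursively by the chain of generalized discriminants of $F_i, F_{i-1}, \ldots, F_{k_0}$; condition (i) at each level says this chain is the one we already control. Since $h_q^{(i-1)}$ already respects all of $\{F_{k_0} = 0\}, \ldots, \{F_{i-1} = 0\}$, matched $x_i$-roots at parameters $\tt$ and $q$ approach $V_{i-1}$ along corresponding branches with the same combinatorics and relative multiplicities, so the linear interpolations between them degenerate in the same way and the extension is continuous. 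The refinement for a factorization $F_n = G_1 \cdots G_s$ then comes for free: each $G_j$ picks out a fixed subset of the global root labels, and the trivialization is built combinatorially on those labels, so it automatically carries each $G_j^{-1}(0)$ at parameter $\tt$ onto $G_j^{-1}(0)$ at parameter $q$.
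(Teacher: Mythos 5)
Your sketch follows the general scheme of the proofs you would find in \cite{Va} and \cite{PP} (the paper itself does not prove this theorem, it quotes it from those references): induction on the number of $x$-variables, lifting a trivialization of the base through the root structure of $F_i$. But there is a genuine gap precisely at the step that constitutes the substance of those papers: the continuous extension of $h_\q^{(i)}$ across $V_{i-1}=\{F_{i-1}=0\}$. You assert that since $h_\q^{(i-1)}$ preserves $\{F_{k_0}=0\},\ldots,\{F_{i-1}=0\}$, the matched roots ``degenerate in the same way'' and continuity follows. Preserving the discriminant locus set-theoretically does not by itself control how the finitely many $x_i$-roots collide as $(t,x^{i-1})$ approaches $V_{i-1}$: one has to show that the root map extends continuously (with the correct clustering and multiplicities) to $V_{i-1}$, that the labels of the colliding roots over $\tt$ and over $\q$ correspond under your matching, and that the interpolated homeomorphisms of the $x_i$-line (or plane) converge uniformly to a homeomorphism over points of $V_{i-1}$. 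This is exactly what Varchenko establishes through quantitative estimates on the roots, and what \cite{PP} obtains by constructing arc-wise analytic trivializations; in your write-up it is restated as if it were automatic, so the heart of the theorem is assumed rather than proved.

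Two further points fail as stated when $\K=\C$. First, interpolating ``piecewise linearly in $x_i$ between consecutive matched roots, extending by translation on the two unbounded tails'' presupposes that the roots are ordered on a line, which only makes sense for real roots; for $\K=\C$ you must extend a continuously varying finite configuration of points in $\C$ to an ambient homeomorphism of $\C$, which needs a genuinely different construction (this is done explicitly in \cite{Va} and, with additional regularity, in \cite{PP}). Second, the matching of the $x_i$-roots at the two parameter values $\tt$ and $\q$ is only defined after choosing a path and is subject to monodromy, both in $t$ and in $x^{i-1}$, over the complement of the discriminant; you need an argument that a consistent labelling exists over the region where you use it, or a construction that avoids global labels. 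The final statement about the factors $G_j$ does indeed follow from a root-tracking construction once all of the above is in place, but it is not ``for free'' independently of it, since it relies on the same continuity of the individual root branches up to the discriminant.
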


For the notion of generalized discriminants see subsection \ref{gd}.

\begin{rem}\label{proj}
The homeomorphism $h_\q$ of Theorem \ref{equisingularity} can be written $h_\q(\tt,x)=(\q,\Psi_\q(x))$, i.e., as a family of homeomorphisms $\Psi_\q :\K^n \to \K^n$.
If $F_i$ are homogeneous polynomials in $x$, then the homeomorphisms $\Psi_\q$ satisfy, by construction, 
$$\forall \la\in\K^*,\forall x\in \K^n\ \ \ \Psi_\q(\la x)=\la \Psi_\q(x)$$
Hence if we define $\P (V_\q)=\{(\q,\x)\in \mathcal V \times \P_\K^n\mid F_n(\q,\x)=0\}$, the homeomorphism $h_\q$ induces an homeomorphism between $\P(V_\tt)$ and $\P(V_\q)$.
\end{rem}

\begin{rem}
By construction of \cite{Va, Va73,VaICM} and \cite{PP}, the homeomorphisms $h_\q$ can be obtained by a local topological trivialization. That is there is a neighborhood $\mathcal W$  of $\tt$ in $\K^r$ and a homeomorphism 
$$
\Phi: \mathcal W \times \K^n \to \mathcal W \times \K^n
$$
such that $\Phi(\q,x)= (\q, h_\q (x)) $.  Moreover, as follows from \cite{PP}, 
we may require that:  
\begin{enumerate}
	\item
	The homeomorphism $\Phi$ is subanalytic.  In the algebraic case, 
	that is in the case considered in this paper, we replace in the assumptions  
	$\mathcal O_{\mathcal V}$ by the ring of $\K$-valued Nash functions on 
$\mathcal V$.  Then $\Phi$ can be chosen semialgebraic. 
\item
$\Phi$ is arc-wise analytic, see Definition 1.2 of \cite{PP}. In particular each $h_\q$ is arc-analytic.  It means that for every real analytic arc 
$\gamma (s) : (-1,1) \to \K^n$, $h_\q\circ \gamma$ is real analytic and the same property holds for $h_\q^{-1}$.
\end{enumerate}
\end{rem}


\section{Generic linear changes of coordinates}\label{generic_change}
Let $f\in\K[x]$ be a polynomial of degree $d$ and let $\k$ be a field extension of $\Q$ containing the coefficients of $f$. We denote by $\ovl f$ the homogeneous part of degree $d$ of $f$. The polynomial
$$\ovl f(x_1,\ldots, x_{n-1},1)\not\equiv 0$$
otherwise $\ovl f$ would be divisible by $x_n-1$ which is impossible since $\ovl f$ is a homogeneous polynomial. So there exists 
$$(\mu_1,\ldots, \mu_{n-1})\in \Q^{n-1}$$
such that $c:=\ovl f(\mu_1,\ldots, \mu_{n-1},1)\neq 0$. Then let us denote by $\phi_\mu$ the linear change of coordinates defined by
$$\begin{array}{ccc} x_i & \longmapsto & x_i+\mu_ix_n \text{ for } i<n\\
x_n & \longmapsto & x_n.\end{array}$$
We have that 
$$\phi_\mu(\ovl f)=cx_n^d+h$$
where $h$ is a homogeneous polynomial of degree $d$  belonging to the ideal generated by $x_1,\ldots, x_{n-1}$. So we have that
$$\phi_\mu(f)=cx_n^d+\sum_{j=1}^da_j(x^{n-1})x_n^{d-j}$$
for some polynomials $a_j(x^{n-1})\in\k[x^{n-1}]$.

\begin{rem}
If $g=g_1\ldots g_s$ is a product of polynomials of $\k[x]$, then the linear change of coordinates $\phi_{\mu}$ defined above also satisfies
$$\phi_{\mu}(g_i)=c_ix_n^{d_i}+\sum_{j=1}^da_{i,j}(x^{n-1})x_n^{d-j}$$
for some nonzero constants $c_i\in\k$.

\end{rem}

\begin{rem}\label{coef}
For every $f\in\k[x]$ of degree $d$ where $\k\subset \K$ and $\mu\in\Q^{n-1}$, we have that
$$\phi_{-\mu}\circ \phi_{\mu}(f)=f.$$
Let us define the support of $f=\sum_{\a\in\N^n}f_\a x^\a$ as
$$\Supp(f):=\{\a\in\N^n\mid f_\a\neq 0\}.$$
Let us assume that
$$\phi_{\mu}(f)=\sum f'_\a x^\a$$
for some polynomials $f'_\a\in\k$. Then the coefficient $f_\a$ has the form
$$f_\a=P_\a(\mu_1,\ldots,\mu_{n-1}, f'_\b)$$
for some polynomial $P_\a\in\Z[\mu, f'_\b]$ depending only on the $f'_\b$ with $|\b|=|\a|$.
\end{rem}

\section{Main theorem}

We can state now our main result: 
\begin{thm}\label{main2}
Let $V\subset \K^n$ (resp. $V\subset \P_\K^n$) be an affine (resp. projective) algebraic set, where $\K=\R$ or $\C$. Then there exist an affine (resp. projective) algebraic set $W\subset \K^n$ (resp. $W\subset \P_\K^n$) and a homeomorphism $h:\K^n\lgw \K^n$ (resp. $h:\P_\K^n\lgw \P_\K^n$) such that:
\begin{enumerate}
\item[(i)] The homeomorphism maps $V$ onto $W$,
\item[(ii)] $W$ is defined by polynomial equations with coefficients in $\ovl\Q\cap \K$,
\item[(iii)] The variety $W$ is obtained from $V$ by a Zariski equisingular deformation. In particular the homeomorphism $h$ can be chosen semialgebraic and arc-analytic,
\item[(iv)]  Let $g_1$,\ldots, $g_s$ be the generators of the ideal defining $V$. 
 Let us fix $\e>0$.  Then $W$ can be chosen so that  the ideal defining it is generated by polynomials $g'_1$,\ldots, $g'_s\in\ovl\Q[x]$ such that if we write
$$g_i=\sum_{\a\in\N^n}g_{i,\a}x^\a\text{ and } g_i'=\sum_{\a\in\N^n}g'_{i,\a}x^\a$$
then for every $i$ and $\a$ we have that:
$$|g_{i,\a}-g'_{i,\a}|<\e.$$
\item[(v)] Every polynomial relationship with rational coefficients between the $g_{i,\a}$ will also be satisfied by the $g'_{i,\a}$. 
\end{enumerate}
\end{thm}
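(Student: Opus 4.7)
The plan is to combine the approximation result Theorem~\ref{thm_alg} with the Zariski equisingularity Theorem~\ref{equisingularity}. Let $g_1,\ldots,g_s\in\K[x]$ generate the ideal of $V$ and let $\y=(g_{i,\alpha})$ be the vector of their coefficients. Apply Theorem~\ref{thm_alg} to $\y$ to obtain a connected open neighborhood $\mathcal U$ of $\tt\in\K^r$, a $\Q$-algebraic Nash function $z(t)\in\FAGN(\mathcal U)$, and polynomials $y_{i,\alpha}(t,z)\in\Q(t)[z]$ with $g_{i,\alpha}=y_{i,\alpha}(\tt,\z)$, such that every polynomial relation with rational coefficients satisfied by $\y$ is satisfied identically by $y(t,z(t))$ on $\mathcal U$. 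Define the deformed polynomials $g_i(t,x):=\sum_\alpha y_{i,\alpha}(t,z(t))\,x^\alpha$ and $F_n(t,x):=g_1(t,x)\cdots g_s(t,x)$; the target $W$ will be produced by specializing to a rational point of $\mathcal U$ close to $\tt$.

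To apply Theorem~\ref{equisingularity}, I first apply a $\Q$-linear change of coordinates $\phi_\mu$ as in Section~\ref{generic_change} so that each $g_i(\tt,\cdot)$ has nonzero leading coefficient $c_i\in\k$ as a polynomial in $x_n$; since each $c_i$ is a $\Q$-polynomial expression in $\y$ nonzero at $\tt$, after shrinking $\mathcal U$ by continuity I may assume $g_i(t,\cdot)$ is monic of degree $d_i$ in $x_n$ throughout $\mathcal U$. I then iteratively construct $F_{n-1},\ldots,F_{k_0}$ by taking, after a further generic $\Q$-linear change in $x^{i-1}$ to ensure monicity, the first not identically zero generalized discriminant of $F_i$ in $x_i$, following the classical Zariski construction at $t=\tt$.

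The critical step is to verify that the hypotheses of Theorem~\ref{equisingularity} hold on some neighborhood $\mathcal V\subset\mathcal U$ of $\tt$. The statement that a given generalized discriminant of $F_i$ be identically zero in $x$ amounts to the vanishing of a finite list of $\Z$-polynomial expressions in the coefficients of $F_i$, which in turn are $\Q$-polynomial expressions in the family $y_{j,\beta}(t,z(t))$. Since these identities hold at $\tt$, Theorem~\ref{thm_alg} forces them to hold identically on $\mathcal U$, so the same index of the ``first not identically zero'' discriminant serves throughout. The remaining requirements -- nowhere vanishing of the normalizing factors and of the first generalized discriminant of $F_{k_0}$ on $\mathcal V$ -- are open conditions that hold at $\tt$, so they persist on a small enough $\mathcal V$ by continuity of Nash functions.

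Now, by Lemma~\ref{alg} applied to $z(t)$, any $\tt'\in\mathcal V\cap\Q_\K^r$ close enough to $\tt$ yields $z(\tt')\in\ovl\Q$, hence $g'_{i,\alpha}:=y_{i,\alpha}(\tt',z(\tt'))\in\ovl\Q\cap\K$. Let $W$ be the zero set of $g'_i(x):=\sum_\alpha g'_{i,\alpha}x^\alpha$, pulled back by $\phi_\mu^{-1}$ (which has rational coefficients, hence preserves~(ii)). Theorem~\ref{equisingularity} applied on $\mathcal V$ produces a semialgebraic arc-analytic homeomorphism $h:\K^n\to\K^n$; its last assertion gives $h(g_i^{-1}(0))=(g'_i)^{-1}(0)$ for each $i$, whence $h(V)=W$ and (i)--(iii). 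Conclusion~(iv) follows from continuity of $y_{i,\alpha}(\cdot,z(\cdot))$ at $\tt$ by choosing $\tt'$ sufficiently near $\tt$, and (v) is immediate from Theorem~\ref{thm_alg}. For the projective case, homogeneity of $g_i$ is itself a set of $\Q$-linear relations on its coefficients, preserved by the deformation, so $g_i(t,\cdot)$ stays homogeneous for all $t\in\mathcal U$; Remark~\ref{proj} then provides the induced homeomorphism on $\P_\K^n$. The main obstacle I anticipate is the third paragraph: arranging the generalized discriminant data so that its equisingularity-defining identities can be viewed as bona fide $\Q$-polynomial relations on $\y$, so that Theorem~\ref{thm_alg} can propagate them across the deformation; once this encoding is in place, the rest is continuity and Theorem~\ref{equisingularity}.
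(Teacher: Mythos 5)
Your proposal is correct and follows essentially the same route as the paper: a $\Q$-linear change of coordinates to achieve monicity, a tower of generalized discriminants whose coefficients are $\Q$-rational functions of the $g_{i,\alpha}$, propagation of the vanishing conditions via Theorem \ref{thm_alg} and of the non-vanishing ones by continuity, then Theorem \ref{equisingularity} plus Lemma \ref{alg} at a nearby rational point, with Remark \ref{proj} for the projective case. The only (immaterial) difference is bookkeeping: you deform first and build the discriminant tower on the deformed family, while the paper builds the tower at $\tt$ and then deforms its $\Q$-rational data.
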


\begin{rem}\label{rem:arc-analytic}
 Arc-analytic semialgebraic maps were introduced first in \cite{Ku}. By \cite{B-M} they coincide with the category of blow-Nash maps (i.e. blow-analytic and semialgebraic maps), and thus as blow-analytic homeomorphisms they appeared already in the classification of real singularities in \cite{Kuo}. Arc-analytic semialgebraic maps and homeomorphisms are often used in real algebraic geometry.  They share many properties of regular morphisms but are more flexible. For instance, arc-analytic semialgebraic homeomorphisms preserve the weight fitration and the virtual Poincar\`e polynomial \cite{McP}, but the equivalence relation defined by such homeomorphisms does not have continuous moduli \cite{PP}.  For more, the interested reader may consult the survey  \cite{KP} and the Kucharz-Kurdyka talk on 2018 ICM \cite{ICM}.
\end{rem}

\begin{rem}\label{rem_deg}
We will see in the proof the following: let $\k$ denote the field extension of $\Q$ generated by the coefficients of the $g_i$ and assume that $\k\neq \ovl\Q$ (otherwise there is nothing to prove). By the primitive element theorem, $\k$ is a simple extension of a purely transcendental extension $\L$ of $\Q$, i.e. $\Q\lgw \L$ is purely transcendental and $\L\lgw \k$ is a simple extension of degree $d$. Then the coefficients of the $g_i'$ belong to a finite extension of $\Q$ of degree $\leq d$. \\
In particular if $\Q\lgw \k$ is purely transcendental then $W$ is defined over $\Q$.

\end{rem}

\begin{rem}
In fact (v) implies that 
$$\forall i,\a\ \ \ \ g_{i,\a}= 0\Longrightarrow g'_{i,\a}= 0.$$
If $\e$ is chosen small enough we may even assume that 
$$\forall i,\a\ \ \ \ g_{i,\a}= 0\Longleftrightarrow g'_{i,\a}= 0.$$
\end{rem}

\begin{proof}[Proof of Theorem \ref{main2}]
Let us consider a finite number of polynomials $g_1$,\ldots, $g_s\in\K[x]$ generating the ideal defining $V$ and let us denote by $g_{i,\a}$ their coefficients as written in the theorem. After a linear change of coordinates $\phi_\mu$ with $\mu\in\Q^{n-1}$ as in Section \ref{generic_change} we can assume that
$$
\phi_\mu(g_r)=c_rx_n^{p_r}+\sum_{j=1}^{p_r}b_{n-1,r,j}(x^{n-1})x_n^{p_r-j} =\sum_{\b\in\N^n}a_{n,r,\b}x^\b\ \ \forall r=1,\ldots, s.$$
Moreover by multiplying each $\phi_{\mu}(g_r)$ by $1/c_r$ we can assume that $c_r=1$ for every $r$. We denote by $f_n$ the product of the $\phi_\mu(g_r)$ and by $a_{n}$ the vectors of coefficients $a_{n,r,\b}$. The entries of $a_n$ are polynomial functions in the $g_{i,\a}$ with rational coefficients, let us say
\begin{equation}\label{lin_ch}a_n=A_n(g_{i,\a})\end{equation}
for some $A_n=(A_{n,r,\b})_{r,\b}\in \Q(u_{i,\a})^{N_n}$ for some integer $N_n>0$ and some new indeterminates $u_{i,\a}$.\\
Let $l_n$ be the smallest integer such that
$$\Delta_{n,l_n}(a_{n})\not\equiv 0$$
where $\Delta_{n,l}$ denotes the $l$-th generalized discriminant of $f_n$. In particular we have that
$$\Delta_{n,l}(a_n)\equiv 0 \ \ \forall l<l_n.$$
After a linear change of coordinates $x^{n-1}$ with coefficients in $\Q$ we can write
$\Delta_{n,l_n}(a_{n})=e_{n-1}f_{n-1}$
with
$$f_{n-1}=\sum_{\b\in\N^n}a_{n-1,\b}x^\b=x_{n-1}^{d_{n-1}}+\sum_{j=1}^{d_{n-1}}b_{n-2,j}(x^{n-2})x_{n-1}^{d_{n-1}-j}$$
for  some  constants $e_{n-1}, a_{n-1,\b}\in\k$, with $e_{n-1}\neq 0$, and some polynomials $b_{n-2,j}\in\k[x^{n-2}]$. We denote by $a_{n-1}$ the vector of coefficients $a_{n-1,\b}$. Let $l_{n-1}$ be the smallest integer such that
$$\Delta_{n-1,l_{n-1}}(a_{n-1})\not\equiv 0$$
where $\Delta_{n-1,l}$ denotes the $l$-th generalized discriminant of $f_{n-1}$.\\
We repeat this construction and define a sequence of polynomials $f_j(x^j)$, $j=k_0,\ldots, n-1$ for some $k_0$, such that
$$\Delta_{j+1,l_{j+1}}(a_{j+1})=e_j\left(x_{j}^{d_{j}}+\sum_{k=1}^{d_{j}}b_{j-1,k}(x^{j-1})x_{j}^{d_{j}-k}\right)=e_j\left(\sum_{\b\in\N^n} a_{j,\b}x^\b\right)=e_jf_j$$
is the first non identically zero generalized discriminant  of $f_{j+1}$, where $a_j$ denotes the vectors of coordinates $a_{j,\b}$ in $\k^{N_j}$, i.e. we have that :
\begin{equation}\label{2}\Delta_{j+1,l_{j+1}}(a_{j+1})=e_j\left(\sum_{\b\in\N^n} a_{j,\b}x^\b\right)=e_jf_j \end{equation}
and 
\begin{equation}\label{3}\Delta_{j+1,l}(a_{j+1})\equiv 0\ \ \forall l<l_{j+1},\end{equation}
until we get $f_{k_0}=1$ for some $k_0\geq 1$. 

By  \eqref{2} we see   that the entries of  $a_j$ and  $e_j$ are rational functions in the entries of $a_{j+1}$ for every $j<n$. Thus by \eqref{lin_ch} we see that the entries of the $a_k$ and the $e_j$ are rational functions in the $g_{i,\a}$ with rational coefficients, let us say
$$a_k=A_k(g_{i,\a}),\ \ e_j=E_j(g_{i,\a})$$
for some $A_k\in\Q(u_{i,\a})^{N_k}$ and $E_j\in\Q(u_{i,\a})$ for every $k$ and $j$, where the $u_{i,\a}$ are new indeterminates for every $i$ and $\a$.\\
\\
By Theorem \ref{thm_alg}, there exist a new set of indeterminates $t=(t_1,\ldots, t_r)$, an open domain $\mathcal U\subset\K^r$,  $(\tt_1,\ldots, \tt_r)\in\mathcal U$, polynomials 
$g_{i,\a}(t,z)\in\Q(t)[z]$, for every $i$ and $\a$,  and $z(t)\in\FAGN(\mathcal U)$ such that $g_{i,\a}(\tt,z(\tt))=g_{i,\a}$ for every $i$ and $\a$.
\\

Since the $a_l$ and the $e_j$ are  rational functions  with rational coefficients in the $g_{i,\a}$, the system of equations \eqref{3} is equivalent to a system of polynomial equations with rational coefficients 
\begin{equation}\label{dis}f(g_{i,\a})=0.\end{equation}
By  Theorem \ref{thm_alg} the functions $g_{i,\a}(t,z(t))$  are solutions of  this system of equations.

 Because $e_j(g_{i,\a})=e_j(g_{i,\a}(\tt,z(\tt)))\neq 0$ for every $j$, we have that none of the $e_j(g_{i,\a}(t,z(t)))$ vanishes in a small open ball  $\mathcal V\subset\mathcal U$ centered at $\tt$. 

In particular we have that
\begin{equation}\label{lead_coef}e_j(g_{i,\a}(t,z(t)))\neq 0\ \ \forall j,\ \forall t\in\mathcal V.\end{equation}
For $t\in\mathcal V$ and $ r=1,\ldots, s,$ we define 
\begin{align*}
& g'_r(t,x):=\sum_{\a\in\N^n}g_{i,\a}(t,z(t))x^\a,\\
& F_n(t,x)=\prod_{r=1}^sG_r(t,x), \text{ where} \\
& G_r(t,x)=\sum_{\a\in\N^n}A_{n,r,\a}(g_{i,\a}(t,z(t)))x^\a,  r=1,\ldots, s,
\end{align*}
and for $j=k_0,\ldots, n-1$
$$F_j(t,x)=\sum_{\a\in\N^n}A_{j,\a}(g_{i,\a}(t,z(t)))x^\a.$$
In particular we have that $G_r(\tt,x)=\phi_\mu(g_r(x))$ for every $r$.

Let us denote by $\O_{\mathcal V}$ the ring of $\K$-analytic functions on $\mathcal V$.
In particular we have that $F_j(t,x)$ is a polynomial of $\O_{\mathcal V}[x^i]$, of degree $d_j$ in $x_j$, such that the coefficient of $x_j^{d_j}$ is $e_j(g_{i,\a}(t,z(t)))$ and whose discriminant (seen as a polynomial in $x_j$) is equal to $e_{j-1}(g_{i,\a}(t,z(t)))F_{j-1}(t,x)\in\O_{\mathcal V}[x^{j-1}]$ for every $j$ by \eqref{dis}.\\

Thus the family $(F_j)_j$ satisfies the hypothesis of Theorem \ref{equisingularity} by \eqref{lead_coef}. Hence the algebraic hypersurfaces $X_0:=\{F_n(0,x)=0\}$ and $X_1:=\{F_n(1,x)\}$ are homeomorphic. Moreover the homeomorphism between them maps every  component of $X_0$ defined by $G_r(0,x)=0$ onto the component of $X_1$ defined by $G_r(1,x)=0$. This proves that the algebraic variety defined by $\{\phi_{\mu}(g_1)=\cdots =\phi_{\mu}(g_s)=0\}$ is homeomorphic to the algebraic variety $\{G_1(0,x)=\cdots=G_s(0,x)=0\}$ which is defined by polynomial equations over $\ovl\Q$. Thus $V$ is homeomorphic to $W:=\{g'_1(x)=\cdots=g'_s(x)=0\}$. 
\\
\\
Moreover since $\ovl\Q\cap \K$ is dense in $\K$, we can choose $\q\in\ovl\Q$ as close as we want to $\tt$. In particular by choosing $\q$ close enough to $\tt$ we may assume that (iv)  in Theorem \ref{main2} is satisfied, since $\g$, $t\mapsto z(t)$ and the $g_{i,\a}$ are continuous functions.\\
\\
Finally we have that $z(\q)$ is algebraic of degree $\leq d$ over $\Q$ by Lemma \ref{alg} (see Remark \ref{rem_deg}).\\
Thus Theorem \ref{main2} is proven in the affine case. The projective is proven is the same way by Remark \ref{proj}.
\end{proof}


We can remark that Theorem \ref{main2} (v) implies that several algebraic invariants are preserved by the homeomorphism $h$. For instance we have the following corollary:

\begin{cor}\label{HS}
 For $\e>0$ small enough the Hilbert-Samuel function of $W$ is the same as that of $V$.
\end{cor}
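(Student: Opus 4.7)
The Hilbert-Samuel function of an ideal $I \subset \K[x_1,\ldots,x_n]$ is determined by the initial ideal $\ini_<(I)$ with respect to any fixed monomial order $<$, since it coincides with that of the monomial ideal $\ini_<(I)$, which is a purely combinatorial invariant.

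My plan is to show that, for $\e$ small enough, the initial ideals $\ini_<(I)$ and $\ini_<(I')$ coincide, where $I := \langle g_1, \ldots, g_s \rangle$ and $I' := \langle g'_1, \ldots, g'_s \rangle$. Fix a monomial order $<$ and run Buchberger's algorithm on the generators of $I$. The algorithm terminates in finitely many steps; each step consists of forming an $S$-polynomial and reducing it modulo the current basis. Because Buchberger's algorithm uses only field operations on the input coefficients, every coefficient produced is a rational expression $P(g_{i,\a})/Q(g_{i,\a})$ with $P, Q \in \Q[u_{i,\a}]$, and the combinatorial output---in particular the set of leading monomials of the resulting Gr\"obner basis---is determined entirely by the pattern of which of these finitely many expressions vanish and which do not.

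By (v) of Theorem \ref{main2}, every polynomial relation $P(g_{i,\a})=0$ with $P \in \Q[u_{i,\a}]$ is preserved under the replacement $g_{i,\a} \leadsto g'_{i,\a}$, so every vanishing expression remains vanishing after perturbation. Conversely, since only finitely many non-vanishing expressions arise during the algorithm and each depends continuously on the coefficients, we may choose $\e$ smaller than the minimum of their absolute values so that none of them vanishes after perturbation (and, in particular, no denominator becomes zero). Consequently, Buchberger's algorithm applied to $\{g'_1, \ldots, g'_s\}$ has exactly the same combinatorial history as for $\{g_1, \ldots, g_s\}$ and produces a Gr\"obner basis of $I'$ whose leading monomials coincide with those for $I$. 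Therefore $\ini_<(I) = \ini_<(I')$, and the Hilbert-Samuel functions of $V$ and $W$ agree.

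The principal difficulty is bookkeeping: one must verify that all coefficient-tests that arise during Buchberger's procedure are genuinely rational expressions in the $g_{i,\a}$ with rational coefficients (automatic from the field-theoretic nature of the algorithm), and that the bound $\e$ furnished by (iv) of Theorem \ref{main2} can be chosen small enough to control the finitely many non-vanishing tests simultaneously---which is possible precisely because finitely many continuous nowhere-zero quantities admit a uniform positive lower bound near the unperturbed point.
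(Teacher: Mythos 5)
Your proposal is correct and follows essentially the same route as the paper: track Buchberger's algorithm, observe that all coefficients produced are rational expressions in the $g_{i,\a}$ with rational coefficients, use (v) of Theorem \ref{main2} to preserve the vanishing relations and the $\e$-closeness of (iv) to preserve the finitely many non-vanishing tests, and conclude that the leading monomials (hence the Hilbert--Samuel function, via the initial ideal) are unchanged. If anything, you are slightly more explicit than the paper in requiring \emph{all} intermediate non-vanishing tests to survive the perturbation, whereas the paper states this only for the leading terms of the final Gr\"obner basis; this is a harmless strengthening of the same argument.
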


\begin{proof}[Proof of Corollary \ref{HS}]
Let $h_1$, \ldots, $h_m$ be a Gr\"obner basis of the ideal $I$ of $\K[x]$ generated by the $g_i$ with respect to a given monomial order $\preceq$. Let us recall that for $f=\sum_{\a\in\N^n} f_\a x^\a\in\K[x]$, $f\neq 0$, we denote the leading term of $f$ by $\lt(f)=f_{\a_0}x^{\a_0}$ where $\a_0$ is the largest nonzero exponent of the support of $f$ with respect to the monomial order:
$$\Supp(f)=\{\a\in\N^n \mid f_\a\neq 0\}.$$
A Gr\"obner basis of $I$ is computed by considering S-polynomials and divisions (see \cite{CLO} for more details):\\
- the S-polynomial of two nonzero polynomials $f$ and $g$ is  defined as follows: set $\lt(f)=ax^\a$ and $\lt(g)=bx^\b$ with $a$, $b\in\K^*$, and let $x^\d$ be the least common multiple of $x^\a$ and $x^\b$. Then the S-polynomial of $f$ and $g$ is
$$S(f,g):=\dfrac{x^{\d-\a}}{a}f-\dfrac{x^{\d-\b}}{b}g.$$
- the division of a polynomial $f$ by polynomials $f_1$, \ldots, $f_l$ is defined inductively as follows: firstly after some renumbering one assume that $\lt(f_1)\preceq \lt(f_2)\preceq \cdots\preceq \lt(f_l)$. Then we consider the smallest integer $i$ such that $\lt(f)$ is divisible by $\lt(f_i)$. If such a $i$ exists one sets $q_i^{(1)}=\dfrac{\lt(f)}{\lt(f_i)}$ and $q_j^{(1)}=0$ for $j\neq i$ and $r^{(1)}=0$. Otherwise one sets $q^{(1)}_j=0$ for every $j$ and $r^{(1)}=\lt(f)$. We repeat this process by replacing $f$ by $f-\sum_{j=1}^lq^{(1)}_jf_j-r^{(1)}$. After a finite number of steps we obtain a decomposition
$$f=\sum_{j=1}^lq_j f_j+r$$
where none of the elements of $\Supp(r)$ is divisible by any $\lt(f_i)$. The polynomial $r$ is called the remainder of the division of $f$ by the $f_i$.\\
Thus we can make the following remark: \emph{every remainder of the division of some S-polynomial $S(g_i,g_j)$ by $g_1$, \ldots, $g_s$ is a polynomial whose coefficients are rational functions on the coefficients of the $g_k$.}\\
The Buchberger's Algorithm is as follows: we begin with $g_1$,\ldots, $g_s$ the generators of $I$ and we compute all the S-polynomials of every pair of polynomials among the $g_i$. Then we consider the remainders of the divisions of these S-polynomials by the $g_i$. If some remainders are nonzero we add them to the family $\{g_1,\ldots, g_s\}$. Then we repeat the same process that stops after a finite number of setps.\\
By the previous remark  if $h$ denotes the remainder of the division of a S-polynomial $S(g_i,g_j)$ by the $g_k$ then we have a relation of the form
\begin{equation}\label{div}S(g_i,g_j)=\sum_{l=1}^s b_lg_l+h\end{equation}
and the $b_l$ belong to the field extension of $\Q$ generated by the coefficients of the $g_l$. Let $h_\b$ denote the coefficient of $x^\b$ in $h$. Then for those $h_\b$ that are nonzero, Equation \eqref{div} provides an expression of them as rational functions in the $g_{i,\a}$, let us say
\begin{equation}\label{div1}\forall \b\in\Supp(h)\ \ \ \ h_\b=H_\b(g_{i,\a}).\end{equation}
 For those $h_\b$ equal to zero Equation \eqref{div} provides a polynomial relation with rational coefficients between the $g_{i,\a}$:
\begin{equation}\label{div2} \forall \b\notin \Supp(h)\ \ \ \ \ H_\b(g_{i,\a})=0.\end{equation}
And Equation \eqref{div} is equivalent to the systems of equations \eqref{div1} and \eqref{div2}. This remains true if we replace the $g_i$ by polynomials whose coefficients are rational functions in the $g_{i,\a}$ with rational coefficients. Thus the fact that $h_1$,\ldots, $h_m$ is a Gr\"obner basis obtained from the $g_i$ by Buchberger's Algorithm is equivalent to a system of equations
\begin{equation}\label{grb} Q_k(g_{i,\a})=0  \text{ for }k\in E\end{equation}
where $E$ is a (not necessarily finite) set. 
Thus by Theorem \ref{main2} (v) we see that these equations are satisfied by the coefficients of the $g_i'$, hence  the ideal defining $W$ has a Gr\"obner basis $h_1'$, \ldots, $h_m'$ obtained from the $g'_i$ by doing exactly the same steps in Buchberger's Algorithm, and $\Supp(h'_i)\subset\Supp(h_i)$ for every $i$.\\
Moreover  the initial terms of the $h_i$ are rational functions in the $g_{i,\a}$ with rational coefficients, let us say $H_i(g_{i,\a})$ for some rational functions $H_i$. By choosing $\e$ small enough we insure that
$$H_i(g'_{i,\a})\neq 0 \ \ \ \forall i.$$
Thus the leading exponents of the $h'_i$ are equal to those of the $h_i$.  In particular the Hilbert-Samuel function of $W$ is the same as that of $V$, and (iv) in Theorem \ref{main2} is proven.
\end{proof}

\begin{rem}
In fact we have proven that the ideal of leading terms of $I$ is the same as the ideal of leading terms of the ideal defining $W$. So we could have concluded by \cite[Theorem 5.2.6]{GP} for instance.
\end{rem}


\section{Complete algorithm}
\subsection{Settings}\ \\
\textbf{Input: } \begin{enumerate}
\item polynomials $g_1$, \ldots, $g_s\in\K[x]$ whose coefficients $g_{i,\a}$ belong to a finitely generated field extension $\k$ over $\Q$. 
\item a presentation 
$$\k=\Q(\tt_1,\ldots,\tt_r,\z)$$
where the $\tt_i$ are algebraically independent over $\Q$ and $\z$ is finite over $\Q(\tt_1,\ldots, \tt_r)$. 
\item the minimal polynomial $P(z)$ of $\z$ over $\Q(\tt_1,\ldots, \tt_r)$. 
\item the $g_{i,\a}$ are given as rational functions in the $\tt_i$ and $\z$, and the coefficients of $P(z)$ as rational functions in the $\tt_i$.

\item a positive real number $\e$.
\end{enumerate}
Moreover we assume that the $\tt_i$ and $\z$ are computable numbers \cite{Tu}. Let us recall that a real computable number is a number $\x\in\R$ for which there is a Turing machine that computes a sequence $(\q_n)$ of rational numbers such that $|\x-\q_n|<\frac{1}{n}$ for every $n\geq 1$. A computable number is a complex number whose real and imaginary parts are real computable numbers. The set of  computable numbers is an algebraically closed field \cite{Ri}. More precisely if $\tt_1$,\ldots, $\tt_r$ are computable numbers such that for every $i\in\{1,\ldots, r\}$ $(\q_{i,n})_n$ is a sequence of $\Q+i\Q$ computed by a Turing machine with $|\tt_i-\q_{i,n}|<\frac{1}{n}$ for every $n$, and if $\z\in\C$ satisfies $P(\tt,\z)=0$ for some reduced polynomial $P(t,z)\in\Q[t,z]$, then one can effectively find a Turing machine that computes a sequence $(\q_n)_n$ of $\Q+i\Q$ such that 
$|\z-\q_n|<\frac{1}{n}$ for every integer $n$.\\

\textbf{Output: } polynomials $g'_1$,\ldots, $g'_s\in(\ovl\Q\cap\K)[x]$ with the properties:
\begin{enumerate}
\item the pairs $(V(g_i),\K^n)$ and $(V(g'_i),\K^n)$ are homeomorphic, and the homeomorphism is semialgebraic and arc-analytic.
\item the coefficients $g'_{i,\a}$ of $g'_i$ satisfy the properties:
$$g'_{i,\a}\neq 0 \Longleftrightarrow g_{i,\a}\neq 0$$
$$|g'_{i,\a}-g_{i,\a}|<\e \text{ for every } i \text{ and }\a$$
and  every polynomial relation with coefficients in $\Q$ satisfied by the  $g_{i,\a}$ is also satisfied by the $g'_{i,\a}$.
\end{enumerate}

\subsection{Algorithm:} We present here the successive steps of the algorithm.\\
\\
\textbf{(1)} We make a linear change  of coordinates with coefficients in $\Q$, denoted by $\phi_\mu$ with $\mu\in\Q^{n-1}$, such that each of the $g_i$ is a monic polynomial in $x_n$ of degree $\deg(g_i)$.\\
We denote by $f_n$ the product of the $g_i$ after this change of coordinates, and by $a_n$ the vector of the coefficients of the $g_i$  after this change of coordinates (seen as a polynomial in $x_n$).\\
\\
\textbf{(2)} For every $j$ from $n$ to $1$ we do the following:  let us assume that $f_j$ is a polynomial in $x_1$, \ldots, $x_j$ having a nonzero monomial $e_jx_j^{\deg(f_j)}$, $e_j\in\k^*$. We denote by $a_j$ the vector of the coefficients of $f_j$ seen as a polynomial in $x_j$. We consider the generalized discriminants $\Delta_{j,l}$ of $f_j$ with respect to $x_j$, and we denote by $l_j$ the smallest integer such that
$$\Delta_{j,l_j}(a_j)\neq 0.$$
These polynomials $\D_{j,l_j}(a_j)$ can be effectively computed (see \ref{gd}).\\
We perform a linear change of coordinates (with coefficients in $\Q$) in $x_1$, \ldots, $x_{j-1}$ such that $\Delta_{j,l_j}(a_j)$ becomes a unit times a monic polynomial of degree $\deg(\Delta_{j,l_j}(a_j))$ in $x_{j-1}$, and we denote by $f_{j-1}$ this new monic polynomial.
\\
\\
\textbf{(3)} We stop the process once we have that $f_{j-1}$ is a nonzero constant.\\
\\
\textbf{(4)} We consider \eqref{2}  as a system of polynomial equations that will allow us to compute the expression of   the $e_j$ as rational functions in the $g_{i,\a}$.\\
\\
\textbf{(5)} We denote by $P(t,z)$ the monic polynomial of $\Q(t)[z]$ such that $P(\tt,z)=P(z)$ is the minimal polynomial of $\z$ over $\Q(\tt)$. \\
By replacing $\e$ by a smaller positive number we may assume that 
$$d(\tt,\D_P)>2\e$$
where $\D_P$ denotes the discriminant locus of  $P(t,z)$ seen as a polynomial in $z$. This discriminant  is computed as $\Res_z(P,\partial P/\partial z)$. See \cite{DGY} or \cite[Theorem C]{BM} for a practical way of choosing such a $\e$.
\\
\\
\textbf{(6)} Let $\z_1:=\z$, $\z_2$,\ldots , $\z_d$ be the distincts roots of $P(\tt,z)$. These are computable numbers and so we can compute $\eta\in\Q_{>0}$ such that  all the differences between two $\z_i$ are strictly greater than $\eta$.\\
Let $z(t)$ be the root of $P(t,z)$ such that $z(\tt)=\z$. We can write
$$w(t):=z(t+\tt)=\sum_{\a\in\N^r}\z_\a t^\a,\ \ \z_0=\z.$$
 We write 
$$P(t,z)=p_0(t)+p_1(t)z+\cdots+p_{d-1}(t)z^{d-1}+z^d$$
where the $p_i(t)\in\K(t)$. Set 
$$M:=1+\max_{0\leq i\leq d-1}\max_{\xxi\in B(0,2\e)} |p_i(\xxi)|.$$
Then $|\z_\a|\leq \dfrac{M}{(2\e)^{|\a|}}$ for every $\a$ (by the Cauchy bounds for the roots of a monic polynomial since $w(t)$ is convergent on $B(0,2\e)$ by (5)). Let $W_k(t)$ be the homogeneous term of degree $k$ in the Taylor expansion of $w(t)$: $W_k(t)=\sum_{|\a|=k}\z_\a t^\a$. Then
$$\forall \xxi\in B(0,\e)\ \ \ |W_k(\xxi)|\leq \dfrac{M}{2^k}\binom{k+r-1}{r-1}.$$
We have that
$$\forall k\geq r\ \ \ \ \dfrac{M}{2^k}\binom{k+r-1}{r-1}\leq \dfrac{M2^r}{(r-1)!}\dfrac{k^r}{2^k}.$$
Then choose $k_0\geq r$ such that $ \dfrac{k^r}{\sqrt{2}^{k}}\leq 1$ forall $k\geq k_0$. Therefore
$$\forall \xxi\in B(0,\e), \forall k\geq k_0\ \ \ |W_k(\xxi)|\leq \dfrac{M2^r}{(r-1)!}\dfrac{1}{\sqrt{2}^{k}}$$
and
$$\forall \xxi\in B(0,\e), \forall k\geq k_0\ \ \ \left|\sum_{l\geq k}W_l(\xxi)\right|\leq \dfrac{M2^r}{(r-1)!}\frac{\sqrt{2}}{\sqrt 2-1}\dfrac{1}{\sqrt{2}^{k}}=C\dfrac{1}{\sqrt{2}^{k}}.$$
\\
\\
\textbf{(7)} Now we want to  determine the computable number $z(\q)=w(\q-\tt)$ for a given choice of $\q\in B(\tt,\e)\cap(\Q+i\Q)^r$. This number is one of the roots of $P(\q,z)$. These roots are computable numbers and so we can bound from below all the differences between each two of them: let $\d\in\Q_{>0}$ be such a bound.\\
By the previous step one can compute an integer $k$ such that
$$\forall \xxi\in B(0,\e), \ \left|\sum_{l\geq k}W_l(\xxi)\right|\leq C\dfrac{1}{\sqrt{2}^{k}}\leq \dfrac{\d}{2}.$$
So we can distinguish $z(\q)$ from the other roots of $P(\q,z)$.
\\
\\
\textbf{(8)} Choose $\q\in(\Q+i\Q)^r$ such that 
\begin{enumerate}
\item[(i)] $\q$ is not in the discriminant locus of $P(t,z)$ seen as a polynomial in $z$,
\item[(ii)] $\|\q-\tt\|<\e$,
\item[(iii)] $e_j(g_{i,\a}(\q,z(\q)))\neq 0$.

\end{enumerate}
The first condition is insured by the choice of $\e$ in (5).\\
 In order to check that $e_j(g_{i,\a}(\q,z(\q)))\neq 0$, one only has to choose $\q$ such that $\|\tt-\q\|$ is small enough and this can be done effectively. Indeed the $e_j(g_{i,,\a})$ are rational functions in the $t_i$ and $z$, thus we can effectively bound the variations of $e_j$ locally around $\tt$.
 \\
 \\
 \textbf{(9)}  Then we evaluate  the $g_{i,a}(t,z(t)) $
at $(\q,z(\q))$. We denote these values by $ g'_{i,\a}$ and we define
 the polynomials
$$g'_i:=\sum_{\a\in\N^n}g'_{i,\a}x^\a.$$

\subsection{Generalized discriminants}\label{gd}
We follow  Appendix IV of \cite{W}, \cite{BPRbook} or \cite{Roy}. The generalized discriminants, or subdiscriminants, $\D_l$ of a polynomial
$$f=x^d+\sum_{j=1}^db_jx^{d-j}$$
can be defined as follows:\\
Let $\xi_1$,\ldots, $\xi_d$ be the roots of $f$ (with multiplicities), and set $s_i=\sum_{k=1}^d \xi_k^i$ for every $i\in \N$. Then
$$\D_{d+1-l}=\left|\begin{array}{cccc} s_0 & s_1 & \cdots & s_{l-1}\\
s_1 & s_2 & \cdots & s_l \\
\cdots & \cdots & \cdots & \cdots\\
s_{l-1} & s_l & \cdots & s_{2l-2} \end{array}\right|.$$
Thus $\Delta_1$ is the classical discriminant of $f$.  
The polynomials $\D_l$ may be effectively computed in term of the $s_i$, and those can be effectively computed in terms of the $b_i$.  The polynomial $f$ admits exactly 
$k$ distinct complex roots if and only if $\Delta_{1} = \cdots = \Delta_{d-k}=0$ and 
$\Delta_{d-k+1} \ne 0$.



\begin{thebibliography}{00}

\bibitem[BPRbook]{BPRbook} S. Basu, R. Pollack, and M.-F. Roy, Algorithms in real algebraic geometry, vol. 10 of Algorithms and Computation in Mathematics, Springer-Verlag, Berlin, second ed., 2006. 

\bibitem[BM90]{B-M}  E. Bierstone, P. D. Milman,
Arc-analytic functions,
\emph{Invent. math.}  \textbf{101} (1990), 411--424.

\bibitem[BPR17]{BPR} M. Bilski, A. Parusi\'nski, G. Rond, Local topological algebraicity of analytic function germs, \textit{J. Algebraic Geom.}, \textbf{26}, (2017), 177-197.

\bibitem[BCR] {BCR}  J. Bochnak, M. Coste, M.-F. Roy,
\emph{Real Algebraic Geometry}, Springer-Verlag, Berlin Heidelberg
1998.

\bibitem[BM95]{BM}  L. P. Bos, P. D. Milman,  Sobolev-Gagliardo-Nirenberg and Markov type inequalities on subanalytic domains, \textit{Geom. Funct. Anal.}, \textbf{5}, (1995), no. 6, 853-923.

\bibitem[CLO07]{CLO} D. Cox, J. Little, D. O'Shea, Ideals, Varieties, and Algorithms, An introduction to computational algebraic geometry and commutative algebra, Springer, New York, 2007.

\bibitem[DGY96]{DGY} J.-P. Dedieu, X. Gourdon, J.-C. Yakoubsohn, 
Computing the distance from a point to an algebraic hypersurface, The mathematics of numerical analysis (Park City, UT, 1995), 285-293,
Lectures in Appl. Math., 32, Amer. Math. Soc., Providence, RI, 1996. 

\bibitem[GP08]{GP} M. Greuel, G. Pfister, A SINGULAR introduction to commutative algebra, \textit{Springer, Berlin}, 2008.

\bibitem[H80]{H}
R. Hardt, Semi-algebraic local-triviality in semi-algebraic mappings. {\em Amer. J. Math.} 102 (1980), no. 2, 291--302.

\bibitem[Kuo85]{Kuo}  T.-C. Kuo,
On classification of  real singularities, 
      \emph{Invent. math.} \textbf{82} (1985), 257--262.
      
      \bibitem[Ku88]{Ku} K. Kurdyka, 
Ensembles semi-alg\'ebriques
sym\'etriques par arcs, \emph{Math. Ann.}  \textbf{ 282} (1988), 445--462.

\bibitem[KK18]{ICM} W. Kucharz, K. Kurdyka, From continuous ration to regulous functions, Proc. Int. Cong. of Math. -- 2018 Rio de Janeiro, Vol 1 (715-744).

\bibitem [KP08]{KP}  K. Kurdyka, A. Parusi\'nski,
\emph{Arc-symmetric Sets and Arc-analytic Mappings}, dans 
"Arc Spaces and Additive Invariants in Real Algebraic Geometry",  
 Proceedings of Winter School "Real algebraic and Analytic Geometry and 
Motivic Integration", Aussois 2003, eds. M. Coste, K. Kurdyka,
A. Parusi\'nski, 
Panoramas et Synth\`eses 24, 2008, 33--68.


\bibitem[L95]{Lempert95} L. Lempert, Algebraic approximations in analytic geometry, Invent. Math., 121 (1995),
pp. 335--353.

  \bibitem[MP2011]{McP}  C. McCrory, A. Parusi\'nski,
The weight filtration for real algebraic varieties,
\emph{MSRI Publications}, \textbf{18}  "Topology of
Stratified Spaces" (2011), 121--160

\bibitem[Mo84]{Mo} T. Mostowski, Topological equivalence between analytic and algebraic sets, \textit{Bull. Polish Acad. Sci. Math.}, \textbf{32}, (1984), no. 7-8, 393-400. 

\bibitem[PP17]{PP}  L. P\u{a}unescu, A. Parusi\'nski, Arc-wise analytic stratification, Whitney fibering conjecture and Zariski equisingularity, \textit{Adv. Math.}, \textbf{309},  (2017), 254-305. 

\bibitem[RD84]{RD} P.  Ribenboim, L. Van den Dries,,
The absolute Galois group of a rational function field in characteristic zero is a semi-direct product, \textit{Can. Math. Bull.}, \textbf{27}, (1984), 313-315.

\bibitem[Ri54]{Ri} H. G. Rice, Recursive real numbers, \textit{Proc. Amer. Math. Soc.}, \textbf{5}, (1954), 784-791.

\bibitem[Ro]{Ro} G. Rond, Local topological algebraicity with algebraic coefficients of analytic sets or functions, \textit{Algebra Number Theory},  \textbf{12}, (2018), no. 5, 1215-1231.

\bibitem[Roy06]{Roy} M.-F. Roy, Subdiscriminant of symmetric matrices are sums of squares, \textit{Mathematics, Algorithms, Proofs}, volume 05021 of Dagstuhl Seminar Proceedings, Internationales Begegnungs und Forschungszentrum f\"ur Informatik (IBFI), Schloss Dagstuhl, Germany, (2005).

\bibitem[Te90]{Te} B.  Teissier,  Un exemple de classe d'\'equisingularit\'e irrationnelle, \textit{C. R. Acad. Sci. Paris S\'er. I Math.}, \textbf{311}, (1990), no. 2, 111-113.

 \bibitem[Tu36]{Tu} A. M.  Turing,  On Computable Numbers, with an Application to the Entscheidungsproblem, \textit{Proc. London Math. Soc. (2)}, \textbf{42}, (1936), no. 3, 230-265.
  
  \bibitem[Va72]{Va} A. N. Varchenko, Theorems on the topological equisingularity of families of algebraic varieties and families of polynomial mappings, \textit{Math. USSR Izviestija}, \textbf{6}, (1972), 949-1008.


\bibitem[Va73]{Va73} A. N. Varchenko,  
The relation between topological and algebro-geometric equisingularities according to  Zariski. \emph{Funkcional. Anal. Appl.} \textbf{7} (1973),  87--90.

\bibitem[Va75]{VaICM} A. N. Varchenko,   Algebro-geometrical equisingularity and local topological classification of smooth mappings.  Proceedings of the International Congress of Mathematicians (Vancouver, B.C., 1974), Vol. 1, pp. 427--431. Canad. Math. Congress, Montreal, Que., 1975.

  
\bibitem[Wh72]{W} H. Whitney, Complex Analytic Varieties,  Addison-Wesley Publ. Co., Reading, Massachusetts 1972.

\bibitem[Za71]{Za}
O. Zariski, Some open questions in the theory of singularities, \emph{Bull. Amer. Math. Soc.} 
\textbf{77} (1971) 481--491; Oscar Zariski: Collected Papers, Volume IV, MIT Press, 238--248.

\end{thebibliography}
\end{document}